\theoremstyle{plain}
\newtheorem{thm}{Theorem}[section]
\newtheorem{lem}[thm]{Lemma}
\newtheorem{obs}[thm]{Observation}
\theoremstyle{definition}
\newtheorem{prob}{Problem}[section]
\theoremstyle{remark}
\newcommand{\forme}[1]{}
\title{Restrained Italian domination in trees}
\author[Kim]{Kijung Kim}
\address{Department of Mathematics, Pusan National University, Busan 46241, Republic of Korea}
\email{knukkj@pusan.ac.kr}
\date{\today}
\subjclass[2010]{05C69}
\begin{document}

\begin{abstract}

Let $G=(V,E)$ be a graph.
A subset $D$ of $V$ is a \textit{restrained dominating set} if every vertex in $V \setminus D$ is adjacent to a vertex in $D$ and to a vertex in $V \setminus D$.
The \textit{restrained domination number}, denoted by $\gamma_r(G)$, is the smallest cardinality of a restrained dominating set of $G$.
A function $f : V \rightarrow \{0, 1, 2\}$ is a \textit{restrained Italian dominating function} on $G$ if
(i) for each vertex $v \in V$ for which $f(v)=0$, it holds that $\sum_{u \in N_G(v)} f(u) \geq 2$,
(ii) the subgraph induced by $\{v \in V \mid f(v)=0 \}$ has no isolated vertices.
The \textit{restrained Italian domination number}, denoted by $\gamma_{rI}(G)$, is the minimum weight taken over all restrained Italian dominating functions of $G$.
It is known that $\gamma_r(G) \leq \gamma_{rI}(G) \leq 2\gamma_r(G)$ for any graph $G$.
In this paper, we characterize the trees $T$ for which $\gamma_r(T) = \gamma_{rI}(T)$, and we also characterize
the trees $T$ for which $\gamma_{rI}(T) = 2\gamma_r(T)$.

\bigskip

\noindent
{\footnotesize \textit{Key words:}  restrained domination, restrained Italian domination, tree  }
\end{abstract}

\maketitle

\insert\footins{\footnotesize
This research was supported by Basic Science Research Program through the National Research Foundation of Korea funded by the Ministry of Education (2020R1I1A1A01055403).}

\section{Introduction and Terminology}\label{sec:intro}

Let $G=(V,E)$ be a finite simple graph with vertex set $V=V(G)$ and edge set $E=E(G)$.
The \textit{open neighborhood} of $v \in V(G)$ is the set $N_G(v) = \{ u \in V(G) \mid uv \in E(G)\}$
and the \textit{closed neighborhood} of $v \in V(G)$ is the set $N_G[v]:= N_G(v) \cup \{v\}$.
A subset $D$ of $V(G)$ is a \textit{dominating set} if every vertex in $V(G) \setminus D$ is adjacent to a vertex in $D$.
The \textit{domination number} of $G$, denoted by $\gamma(G)$, is the minimum cardinality of a dominating set in $G$.
A dominating set with the cardinality $\gamma(G)$ is called a \textit{$\gamma(G)$-set}.

In \cite{DHHLM}, Domke et al. gave the formal definition of restrained domination.
A subset $S$ of $V(G)$ is a \textit{restrained dominating set} (RDS) if every vertex in $V(G) \setminus S$ is adjacent to a vertex in $S$
and another vertex in $V(G) \setminus S$.
The \textit{restrained domination number} of $G$, denoted by $\gamma_r(G)$, is the minimum cardinality of a restrained dominating set in $G$.
A restrained dominating set with the cardinality $\gamma_r(G)$ is called a \textit{$\gamma_r(G)$-set}.
As explained in \cite{DHHLM}, there is one possible application of the concept of restrained domination.
Each vertex in a RDS $S$ represents a guard and each vertex in $V(G) \setminus S$ represents a prisoner.
Each prisoner must be observed by at least one guard and every prisoner must be seen by at least one
other prisoner to protect the rights of prisoners.
To be cost effective, it is desirable to place as few guards as possible.

A function $f : V(G) \rightarrow \{0, 1, 2\}$ is an \textit{Italian dominating function} on $G$ if
for each vertex $v \in V(G)$ for which $f(v)=0$, it holds that $\sum_{u \in N_G(v)} f(u) \geq 2$.
In \cite{SAMM}, Samadi et al. introduced the concept of restrained Italian domination
as a variant of Italian dominating function.
An Italian dominating function $f : V \rightarrow \{0, 1, 2\}$ is a \textit{restrained Italian dominating function} (RIDF) on $G$ if
the subgraph induced by $\{v \in V \mid f(v)=0 \}$ has no isolated vertices.
A RIDF $f$ gives an ordered partition $(V_0, V_1, V_2)$ (or $(V_0^f, V_1^f, V_2^f)$ to refer to $f$) of $V(G)$, where $V_i := \{ v \in V(G) \mid f(v)=i \}$.
The weight of a RIDF $f$ is $\omega(f):=\sum_{v \in V} f(v)$.
The \textit{restrained Italian domination number}, denoted by $\gamma_{rI}(G)$, is the minimum weight taken over all restrained Italian dominating functions of $G$.
A \textit{$\gamma_{rI}(G)$-function} is a RIDF on $G$ with weight $\gamma_{rI}(G)$.

As noted in \cite[Proposition 3.3]{SAMM}, it holds that $\gamma_r(G) \leq \gamma_{rI}(G) \leq 2\gamma_r(G)$ for any graph $G$.
We define a tree $T$ to be a \textit{$(\gamma_r, \gamma_{rI})$-tree} if $\gamma_r(T) = \gamma_{rI}(T)$.
We define a tree $T$ to be a \textit{restrained Italian tree} if $\gamma_{rI}(T) = 2\gamma_r(T)$.
In this paper, we characterize $(\gamma_r, \gamma_{rI})$-trees and restrained Italian trees.

The rest of this section, we present some necessary terminology and notation.
For terminology and notation on graph theory not given here, the reader is referred to \cite{BM, W}.
The \textit{degree} of $v \in V(G)$ is defined as the cardinality of $N_G(v)$, denoted by $deg_G(v)$.
A \textit{diametral path} of $G$ is a path with the length which equals the diameter of $G$.
A subset $S$ of $V(G)$ is a \textit{packing} in $G$ if the vertices of $S$ are pairwise at distance at least three apart in $G$.
The \textit{packing number} of $G$, denoted by $\rho(G)$, is the maximum cardinality of a packing in $G$.
A packing with the cardinality $\rho(G)$ is called a \textit{$\rho(G)$-set}.

Let $T$ be a (rooted) tree.
A \textit{leaf} of $T$ is a vertex of degree one.
A \textit{stem} (or \textit{support vertex}) is a vertex adjacent to a leaf.
A \textit{weak stem} is a stem that is adjacent to exactly one leaf.
For a vertex $v$ in a rooted tree,
we let $C(v)$ and $D(v)$ denote the set of children and descendants, respectively, of $v$.
The subtree induced by $D(v) \cup \{v\}$ is denoted by $T_v$.
We write $K_{1,n-1}$ for the \textit{star} of order $n \geq 3$.
The \textit{double star} $DS_{p,q}$, where $p, q \geq 1$, is the graph obtained
by joining the centers of two stars $K_{1,p}$ and $K_{1,q}$.
A \textit{healthy spider} $S_{t,t}$ is the graph from a star $K_{1,t}$ by subdividing each edges of $K_{1,t}$.
For two graph $G$ and $H$, if $G$ is isomorphic to $H$, we denote it by $G \cong H$.
For a graph $G$ and its subgraph $S$, $G - S$ denotes the subgraph of $G$ induced by $V(G) \setminus V(S)$.

\section{$(\gamma_r, \gamma_{rI})$-trees}\label{sec:rI=r}

In this section, we characterize the trees for which $\gamma_{rI}(T) = \gamma_r(T)$.
First, we introduce a family $\mathcal{H}$ of trees that can be obtained from a sequence $T_1, T_2, \dotsc, T_m$ $(m \geq 1)$
of trees such that $T_1$ is a double star $DS_{l,n}$ $(l,n \geq 2)$, and if $m \geq 2$, $T_{i+1}$ can be obtained recursively from $T_i$ by one of the following operations for $1 \leq i \leq m-1$.

Define
\[LV(T_i) =\{ v \in V(T_i) \mid v ~\text{is a leaf of}~ T_j ~\text{for some}~ j \leq i\}\]
and
\[SV(T_i) =\{ v \in V(T_i) \mid v ~\text{is a stem of}~ T_j ~\text{for some}~ j \leq i\}.\]
Note that $V(T_i) = LV(T_i) \cup SV(T_i)$.

\vskip5pt
\textbf{Operation} $\mathcal{O}_1$.
If $x \in LV(T_i)$, then $\mathcal{O}_1$ adds a double star $DS_{r,s}$ with a center $u$ and joins $u$ to $x$ to produce $T_{i+1}$,
where $s \geq 2$ and $u$ has $r$ leaves.

\vskip5pt
\textbf{Operation} $\mathcal{O}_2$.
If $x \in SV(T_i)$, then $\mathcal{O}_2$ adds a star $K_{1,t}$ with the center $u$ and joins $u$ to $x$ to produce $T_{i+1}$.

The following is obtained by the induction.

\begin{obs}\label{obs:remark}
With the previous notation, the following holds.
\begin{enumerate}
\item $LV(T_i)$ is a unique minimum RDS of $T_i$.
\item The subgraph induced by $SV(T_i)$ is a forest and each component has at least two vertices.
\end{enumerate}
\end{obs}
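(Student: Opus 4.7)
I would prove (i) and (ii) simultaneously by induction on $i$. The base case $i=1$ with $T_1 = DS_{l,n}$, $l,n\ge 2$, is handled directly: the two centers constitute $SV(T_1)$ and are joined by an edge, yielding (ii); moreover every leaf of $DS_{l,n}$ has degree one, so every RDS must contain every leaf, and since $LV(T_1)$ (the set of all $l+n$ leaves) is itself an RDS, it is the unique minimum RDS, establishing (i).

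For the inductive step, (ii) follows at once from the local shape of the operations. Operation $\mathcal{O}_1$ introduces the new stems $u$ and $v$ as a two-vertex component (the edge $uv$), since $u$ is linked to $V(T_i)$ only through $x \in LV(T_i)$, which does not belong to $SV(T_i)$; Operation $\mathcal{O}_2$ appends the new stem $u$ as a pendant to the component of $x$ in $T_i[SV(T_i)]$. In both cases the resulting induced subgraph is a subforest of the tree $T_{i+1}$ and its components all retain size at least two. For (i), I first verify that $LV(T_{i+1})$ is an RDS of $T_{i+1}$ by checking both conditions on every vertex of $SV(T_{i+1})$: a neighbor in $LV(T_{i+1})$ is supplied by the attached leaves for the new stems and by the induction hypothesis for the old $SV$-vertices (whose $T_i$-neighborhoods are unchanged), while a neighbor in $SV(T_{i+1})$ is supplied by the (ii)-part just established. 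I then take an arbitrary minimum RDS $S$ of $T_{i+1}$: the newly attached true leaves are forced into $S$, and a short swap argument rules out the new stems $u$ (and $v$, in $\mathcal{O}_1$) lying in $S$, for otherwise removing them would still leave an RDS and contradict minimality. Finally I argue that $S \cap V(T_i)$, after a small correction at the attachment vertex $x$ if necessary, is an RDS of $T_i$, so the induction hypothesis gives $|S\cap V(T_i)| \ge |LV(T_i)|$, which combined with the count of new leaves forces $|S|\ge |LV(T_{i+1})|$, with equality only when $S = LV(T_{i+1})$.

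The main obstacle is the attachment vertex $x$ in Operation $\mathcal{O}_1$. Since $x$ lies in $LV(T_i)$ but may already have positive degree in $T_i$, it need no longer be a true leaf of $T_{i+1}$; in particular, if $x \notin S$, its unique non-$S$ neighbor in $T_{i+1}$ could be the new vertex $u$, and then $S \cap V(T_i)$ would fail the restrained condition at $x$ inside $T_i$. The remedy is a local case analysis at $x$: either $x \in S$ already (in which case $S \cap V(T_i)$ is directly an RDS of $T_i$) or one can swap $x$ into $S$ in exchange for one of its $T_i$-neighbors already in $S$, without increasing $|S|$, reducing to the previous case. Operation $\mathcal{O}_2$ is analogous but easier, since $x \in SV(T_i)$ already has an $SV$-neighbor in $T_i$ by (ii) and so the restrained condition at $x$ inside $T_i$ is never in danger.
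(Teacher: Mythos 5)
Your inductive skeleton is the one the paper intends (the paper offers nothing beyond ``obtained by the induction''), and your base case and your argument for part (ii) are correct. The genuine gap is in the counting for part (i), and it sits exactly at the attachment vertex $x$ that you yourself flag as the main obstacle. You write that $S \cap V(T_i)$, ``after a small correction at the attachment vertex $x$ if necessary, is an RDS of $T_i$, so the induction hypothesis gives $|S\cap V(T_i)| \ge |LV(T_i)|$.'' But when the correction (adjoining $x$) is actually needed, the induction hypothesis bounds the \emph{corrected} set, yielding only $|S\cap V(T_i)| \ge |LV(T_i)|-1$; that is one unit short of what you need even for minimality of $LV(T_{i+1})$, and the uniqueness claim ``with equality only when $S=LV(T_{i+1})$'' is asserted but never derived. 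To close the argument you have to track when the correction is forced: $x$ fails inside $T_i$ only if its unique $S$-neighbour, or its unique non-$S$-neighbour, in $T_{i+1}$ is the new stem $u$. In the first case $u\in S$, which recovers the lost unit on the other side of the count; in the second case one should instead invoke the \emph{uniqueness} half of the induction hypothesis (a corrected set that were a minimum RDS of $T_i$ would have to equal $LV(T_i)$, which is impossible under $\mathcal{O}_2$ since there $x\in SV(T_i)\not\subseteq LV(T_i)$). Neither of these steps appears in your plan.

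Relatedly, the ``short swap argument'' that a minimum RDS avoids the new stems is not automatic as stated. Deleting $u$ (or $u$ and $v$) from $S$ can violate the restrained condition \emph{at the deleted vertex} when all of its remaining neighbours lie in $S$ (e.g.\ under $\mathcal{O}_2$ when $x\in S$ as well), and can leave $x$ undominated when $u$ was its only $S$-neighbour; your proposed fix of swapping $x$ into $S$ can in turn violate the restrained condition at a $T_i$-neighbour $w$ of $x$ whose only non-$S$ neighbour was $x$. All of these configurations can be eliminated, but by counting $|S|$ against $|LV(T_{i+1})|$ using both the minimality and the uniqueness parts of the induction hypothesis, not by the purely local deletions and swaps you describe. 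As written, the accounting does not close.
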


\begin{lem}\label{lem:op3}
If  $\gamma_r(T_i) = \gamma_{rI}(T_i)$ and $T_{i+1}$ is obtained from $T_i$ by operation $\mathcal{O}_1$,
then $\gamma_r(T_{i+1}) = \gamma_{rI}(T_{i+1})$.
\end{lem}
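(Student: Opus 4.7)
The inequality $\gamma_r(T_{i+1})\leq \gamma_{rI}(T_{i+1})$ holds for every graph, so the plan is to exhibit a RIDF of $T_{i+1}$ of weight $\gamma_r(T_{i+1})$. Writing $u$ for the center of the attached double star joined to $x$ (with pendant leaves $u_1,\dots,u_r$) and $u'$ for the other center (with pendant leaves $u'_1,\dots,u'_s$, $s\geq 2$), Observation~\ref{obs:remark}(i) gives $\gamma_r(T_{i+1})=|LV(T_{i+1})|=|LV(T_i)|+r+s$.

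The key preliminary step is to pin down any $\gamma_{rI}(T_i)$-function $g$ using the hypothesis $\gamma_{rI}(T_i)=\gamma_r(T_i)$. Because every $v\in V_0^g$ has a neighbor in the support $V_1^g\cup V_2^g$ (RIDF condition (i)) and also a neighbor in $V_0^g$ (condition (ii)), this support is a restrained dominating set of $T_i$. Hence
\[
|V_1^g|+|V_2^g|\geq \gamma_r(T_i)=|LV(T_i)|=\omega(g)=|V_1^g|+2|V_2^g|,
\]
which forces $V_2^g=\emptyset$; the uniqueness clause of Observation~\ref{obs:remark}(i) then forces $V_1^g=LV(T_i)$. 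In particular $g(x)=1$.

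With this in hand, I extend $g$ by setting $f(u)=f(u')=0$ and $f(u_j)=f(u'_j)=1$ for all $j$, so that $\omega(f)=\omega(g)+r+s=\gamma_r(T_{i+1})$. The two RIDF conditions then follow by routine checks: the sum at $u'$ is $s\geq 2$; at $u$ it is $r+g(x)=r+1\geq 2$; at the remaining $V_0$-vertices it is inherited from $g$, since the only edge added inside $V(T_i)$ is $ux$ and $x\in V_1^f$. Non-isolation of $V_0^f=SV(T_{i+1})$ follows from the edge $uu'$ together with Observation~\ref{obs:remark}(ii) applied to $T_i$.

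The only subtlety is the sum condition at $u$ in the boundary case $r=1$: starting from an unspecified $\gamma_{rI}(T_i)$-function one could not rule out $g(x)=0$, and the construction would then fail. The uniqueness in Observation~\ref{obs:remark}(i), combined with the hypothesis, is precisely what closes this gap by forcing $g=\mathbf{1}_{LV(T_i)}$.
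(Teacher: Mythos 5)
Your proof is correct, and it takes a genuinely different route from the paper's. The paper proves the two stronger statements $\gamma_r(T_{i+1})=\gamma_r(T_i)+r+s$ and $\gamma_{rI}(T_{i+1})=\gamma_{rI}(T_i)+r+s$ and then invokes the hypothesis; the second of these requires a lower-bound argument analyzing an arbitrary $\gamma_{rI}(T_{i+1})$-function $g$ (showing $g(x)=1$ so that $g|_{V(T_i)}$ is a RIDF of $T_i$). You bypass that entire half of the argument: since $\gamma_r\leq\gamma_{rI}$ holds universally, you only need a single RIDF of $T_{i+1}$ of weight $\gamma_r(T_{i+1})$, so you never need to determine $\gamma_{rI}(T_{i+1})$ exactly. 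The price you pay is a structural analysis of $\gamma_{rI}(T_i)$-functions (forcing $V_2^g=\emptyset$ and $V_1^g=LV(T_i)$ via the uniqueness in Observation~\ref{obs:remark}), which is essentially Claim~1 from the proof of Theorem~\ref{tree:low-inequality} transplanted to $T_i$. This is a worthwhile trade: the paper's upper-bound step (``every $\gamma_{rI}(T_i)$-function can be extended'') is asserted without justification and, as you correctly observe, is not automatic when $r=1$ and $g(x)=0$; your pinning-down of $g$ is exactly what makes the extension legitimate in that boundary case, so your version is both leaner and more airtight than the published one.
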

\begin{proof}
It follows from Observation \ref{obs:remark} that $\gamma_r(T_{i+1}) = \gamma_r(T_i) +r + s$.
Since every $\gamma_{rI}(T_{i})$-function can be extended to a RIDF of $T_{i+1}$,
we have $\gamma_{rI}(T_{i+1}) \leq \gamma_{rI}(T_i) +r + s$.


We verify $\gamma_{rI}(T_{i+1}) = \gamma_{rI}(T_i) +r + s$.
Let $g$ be a $\gamma_{rI}(T_{i+1})$-function.
If $g(x)=0$, then $g(y)=1$ for each $y \in N_{T_{i+1}}(x)$.
This implies that $\gamma_{rI}(T_{i+1}) \geq \gamma_{rI}(T_i) +r + s +2$, a contradiction.
Thus, we have $g(x)=1$.
It is easy to see that $g|_{V(T_i)}$ is a RIDF.
So, we have $\gamma_{rI}(T_i) \leq \gamma_{rI}(T_{i+1}) -r -s$.

Thus, it follows from $\gamma_r(T_i) = \gamma_{rI}(T_i)$ that $\gamma_r(T_{i+1}) = \gamma_{rI}(T_{i+1})$.
\end{proof}

\begin{lem}\label{lem:op4}
If  $\gamma_r(T_i) = \gamma_{rI}(T_i)$ and $T_{i+1}$ is obtained from $T_i$ by operation $\mathcal{O}_2$,
then $\gamma_r(T_{i+1}) = \gamma_{rI}(T_{i+1})$.
\end{lem}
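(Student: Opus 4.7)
The strategy is to mimic the approach of Lemma~\ref{lem:op3}: exhibit a restrained Italian dominating function on $T_{i+1}$ of weight $\gamma_{rI}(T_i) + t$ by extending a $\gamma_{rI}(T_i)$-function, and then close the argument by chaining with $\gamma_r \le \gamma_{rI}$ and the standing hypothesis. By Observation~\ref{obs:remark}(i), $\gamma_r(T_{i+1}) = |LV(T_{i+1})|$, and since $u$ is a new stem of $T_{i+1}$ (hence not a leaf) while the $t$ leaves $v_1,\dots,v_t$ of the attached star $K_{1,t}$ are the only new leaves, we have $LV(T_{i+1}) = LV(T_i) \cup \{v_1,\dots,v_t\}$, giving $\gamma_r(T_{i+1}) = \gamma_r(T_i) + t$.

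Next I would pin down the form of any $\gamma_{rI}(T_i)$-function $f$. Since $V_1^f \cup V_2^f$ is always a restrained dominating set (this underlies the standard inequality $\gamma_r \le \gamma_{rI}$ cited in the introduction),
\[
  \omega(f) \;=\; |V_1^f| + 2|V_2^f| \;\ge\; |V_1^f \cup V_2^f| \;\ge\; \gamma_r(T_i),
\]
and the hypothesis $\omega(f) = \gamma_{rI}(T_i) = \gamma_r(T_i)$ collapses both inequalities. This forces $V_2^f = \emptyset$ and makes $V_1^f$ a minimum restrained dominating set; by the uniqueness assertion in Observation~\ref{obs:remark}(i), $V_1^f = LV(T_i)$ and $V_0^f = SV(T_i)$. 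In particular $f(x) = 0$, since $x \in SV(T_i)$.

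Finally I would extend $f$ to a function $g$ on $T_{i+1}$ by $g|_{V(T_i)} = f$, $g(u) = 0$, and $g(v_j) = 1$ for $1 \le j \le t$, and verify the RIDF axioms. At $u$, the Italian sum is $g(x) + \sum_{j=1}^{t} g(v_j) = 0 + t \ge 2$, and $u$ has the $V_0^g$-neighbor $x$. At $x$, its neighborhood merely gains $u$ with $g(u) = 0$, which neither lowers the Italian sum nor removes a $V_0^g$-neighbor, so the conditions inherited from $f$ persist; every other vertex of $T_i$ sees exactly its $T_i$-picture. Hence $g$ is a RIDF of weight $\omega(f) + t$, and the chain
\[
  \gamma_r(T_{i+1}) \;\le\; \gamma_{rI}(T_{i+1}) \;\le\; \omega(g) \;=\; \gamma_{rI}(T_i) + t \;=\; \gamma_r(T_i) + t \;=\; \gamma_r(T_{i+1})
\]
collapses to equality throughout, giving $\gamma_r(T_{i+1}) = \gamma_{rI}(T_{i+1})$. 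The step requiring care is the verification of the Italian condition at $u$, which tacitly uses $t \ge 2$ for the star $K_{1,t}$; choosing $g(u) = 0$ rather than a positive value is also what keeps the bookkeeping at $x$ painless, since with $g(u) > 0$ the restriction $g|_{V(T_i)}$ could fail to be a RIDF of $T_i$ and demand a repair.
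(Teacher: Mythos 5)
Your proposal is correct, and its skeleton matches the paper's: compute $\gamma_r(T_{i+1}) = \gamma_r(T_i) + t$ from Observation \ref{obs:remark}, then bound $\gamma_{rI}(T_{i+1})$ from above by extending a $\gamma_{rI}(T_i)$-function at cost $t$. Where you diverge is in how the argument closes. The paper goes on to prove the reverse inequality $\gamma_{rI}(T_{i+1}) \geq \gamma_{rI}(T_i) + t$ by taking an optimal RIDF $g$ of $T_{i+1}$, arguing $g(x) = 0$, and restricting to $T_i$; you skip this entirely and close with the sandwich $\gamma_r(T_{i+1}) \leq \gamma_{rI}(T_{i+1}) \leq \gamma_{rI}(T_i) + t = \gamma_r(T_i) + t = \gamma_r(T_{i+1})$, which is logically sufficient and shorter (the only thing lost is the exact value of $\gamma_{rI}(T_{i+1})$, which the lemma does not need). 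Your version is also more careful on a point the paper glosses over: the assertion that ``every $\gamma_{rI}(T_i)$-function can be extended to a RIDF of $T_{i+1}$'' at cost exactly $t$ is not automatic, since assigning weight $0$ to the new center $u$ requires $u$ to keep a $V_0$-neighbor, i.e.\ requires $f(x) = 0$. This is exactly what your structural analysis supplies: under the hypothesis $\gamma_r(T_i) = \gamma_{rI}(T_i)$, every optimal RIDF of $T_i$ has $V_2 = \emptyset$ and $V_1 = LV(T_i)$ by the uniqueness in Observation \ref{obs:remark}, so $x \in SV(T_i)$ forces $f(x) = 0$ (using that $LV(T_i)$ and $SV(T_i)$ are disjoint, which is implicit in the construction). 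Together with $t \geq 2$ for the Italian condition at $u$, which you correctly flag, the extension is fully justified. So: same approach at heart, with a redundant step removed and an implicit step made explicit.
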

\begin{proof}
It follows from Observation \ref{obs:remark} that $\gamma_r(T_{i+1}) = \gamma_r(T_i) +t$.
Since every $\gamma_{rI}(T_{i})$-function can be extended to a RIDF of $T_{i+1}$,
we have $\gamma_{rI}(T_{i+1}) \leq \gamma_{rI}(T_i) +t$.

We verify $\gamma_{rI}(T_{i+1}) = \gamma_{rI}(T_i) +t$.
Let $g$ be a $\gamma_{rI}(T_{i+1})$-function.
If $g(x)=1$, then $g(u)=1$.
This implies that $\gamma_{rI}(T_{i+1}) \geq \gamma_{rI}(T_i) +t +1$, a contradiction.
Thus, we have $g(x)=0$.
It is easy to see that $g|_{V(T_i)}$ is a RIDF.
So, we have $\gamma_{rI}(T_i) \leq \gamma_{rI}(T_{i+1}) -t$.

Thus, it follows from $\gamma_r(T_i) = \gamma_{rI}(T_i)$ that $\gamma_r(T_{i+1}) = \gamma_{rI}(T_{i+1})$.
\end{proof}

Now we are ready to prove our main theorem.

\begin{thm}\label{tree:low-inequality}
A tree $T$ of order $n \geq 3$ is a $(\gamma_r, \gamma_{rI})$-tree if and only if $T \in \mathcal{H} \cup \{K_{1,t} \mid t \geq 2\}$.
\end{thm}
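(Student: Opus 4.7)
The proof splits into two directions. For the \emph{if} direction, I proceed by induction on the length of the construction sequence. For $T = K_{1,t}$ with $t\ge 2$, a direct check shows $\gamma_r(K_{1,t})=t+1=\gamma_{rI}(K_{1,t})$: a leaf assigned $0$ by a RIDF would force its unique neighbour (the centre) to have value $\geq 2$, and then the leaf would be isolated in $V_0$; hence every leaf must have value $\geq 1$, so the constant-$1$ RIDF is optimal, and the same observation shows that any RDS must contain every vertex. For $T\in\mathcal{H}$, the base case $T_1=DS_{l,n}$ with $l,n\ge 2$ is handled in the same spirit: the set of leaves is a minimum RDS of size $l+n$, and assigning $1$ to every leaf and $0$ to both centres yields a RIDF of weight $l+n$ (the Italian condition uses $l,n\ge 2$, and $V_0$ is the edge joining the two centres, so it has no isolated vertex). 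The inductive step is precisely Lemmas~\ref{lem:op3} and \ref{lem:op4}.

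For the \emph{only if} direction, I induct on $n$, with $n=3,4$ treated by direct inspection (the only $(\gamma_r,\gamma_{rI})$-trees there are $K_{1,2}$ and $K_{1,3}$, while $P_4=DS_{1,1}$ has $\gamma_{rI}(P_4)=4>2=\gamma_r(P_4)$). For $n\ge 5$ assume $\gamma_r(T)=\gamma_{rI}(T)$. If $\mathrm{diam}(T)\le 2$, then $T=K_{1,n-1}$ and we are done. If $\mathrm{diam}(T)=3$, then $T=DS_{l,n}$; a short direct comparison of $\gamma_r$ and $\gamma_{rI}$ shows that equality forces $l,n\ge 2$, so $T\in\mathcal{H}$ as the base tree.

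If $\mathrm{diam}(T)\ge 4$, pick a diametral path $v_0v_1\cdots v_d$ and root $T$ at $v_d$. Since the path is diametral, every child of $v_1$ is a leaf and every child of $v_2$ is either a leaf or a stem whose children are all leaves. I would then identify the last operation that produced $T$ via case analysis on $v_2$. If some child $w\ne v_1$ of $v_2$ is a stem with $\geq 2$ leaves, the subtree at $v_2$ contains a double star, which I peel off by inverting $\mathcal{O}_1$, attaching point $x$ being the grandparent $v_3$ (which must play the role of a former leaf in the construction of the remaining tree $T'$). Otherwise every child of $v_2$ other than $v_1$ is a leaf, and $v_1$ together with its leaves forms a star that I peel off by inverting $\mathcal{O}_2$, so that $T'=T-T_{v_1}$. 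In each sub-case I must verify, using the minimum-RDS structure guaranteed by Observation~\ref{obs:remark}(1) and an argument dual to the one in Lemmas~\ref{lem:op3}--\ref{lem:op4} (``any $\gamma_{rI}(T)$-function, restricted to $V(T')$, is a RIDF of $T'$, and its weight difference with $\gamma_{rI}(T)$ equals $r+s$ or $t$''), that $\gamma_r(T')=\gamma_{rI}(T')$, whence the inductive hypothesis gives $T'\in\mathcal{H}\cup\{K_{1,t}\mid t\geq 2\}$. A final check shows that the attaching vertex sits in $LV(T')$ or $SV(T')$ as the operation requires.

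The main obstacle will be the exhaustiveness of this case analysis and the bookkeeping needed to rule out bad configurations. Specifically, I expect the trickiest points to be: (i) showing that if $v_2$ has degree $2$ and its only ``deep'' branch ends in a single leaf at $v_1$, the hypothesis $\gamma_r=\gamma_{rI}$ is already violated (so this case does not arise); (ii) handling the degenerate situation in which peeling leaves $T'\cong K_{1,t}$, which is in the target family but is not built by $\mathcal{H}$'s operations — here one must peel a slightly larger piece, taking $T'$ to be a double star playing the role of $T_1$; and (iii) verifying that the attaching point lies in $LV(T')$ or $SV(T')$ as dictated by the operation, which amounts to checking its leaf/stem history in the reconstruction of $T'$.
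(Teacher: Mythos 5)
The sufficiency half of your proposal (induction on the length of the construction sequence, base cases $K_{1,t}$ and $DS_{l,n}$, inductive step via Lemmas~\ref{lem:op3} and~\ref{lem:op4}) matches the paper and is fine. The problem is in the necessity half, where your case analysis for identifying the last operation does not work as stated. In your first sub-case you assume some child $w\neq v_1$ of $v_2$ is a stem with at least two leaves and propose to ``peel off a double star by inverting $\mathcal{O}_1$ with attaching point $v_3$.'' But $\mathcal{O}_1$ attaches an \emph{entire} double star to $x$ by a single edge, so inverting it at $v_3$ means deleting all of $T_{v_2}$ and requires $T_{v_2}\cong DS_{r,s}$. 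In your hypothesis $T_{v_2}$ contains at least three non-leaf vertices ($v_2$, $v_1$ and $w$, since $v_1$ is itself a stem on the diametral path), so it is not a double star and the peeling is impossible. The roles are in fact reversed relative to what works: when $v_2$ has a second stem child one must peel only the star $T_{v_1}$ and invert $\mathcal{O}_2$ with attaching point $v_2$; when every child of $v_2$ other than $v_1$ is a leaf (and there is at least one), $T_{v_2}$ \emph{is} a double star and one inverts $\mathcal{O}_1$ at $v_3$. Moreover, the correct choice between $\mathcal{O}_1$ and $\mathcal{O}_2$ is not purely structural: $\mathcal{O}_1$ needs the attaching point in $LV(T')$ and $\mathcal{O}_2$ needs it in $SV(T')$, and by Observation~\ref{obs:remark} membership in $LV(T')$ means membership in the unique minimum RDS of $T'$; the paper therefore splits on whether $v_3$ receives value $1$ or $0$ under a minimum RIDF, a distinction your scheme does not see.

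The second gap is that the facts you list as ``expected tricky points'' are the actual content of the proof, and you do not supply them. From $\gamma_r(T)=\gamma_{rI}(T)$ one first shows that any $\gamma_{rI}(T)$-function $g$ has $V_2^g=\emptyset$ and that $V_1^g$ is a minimum RDS (this is the engine behind every subsequent contradiction: one exhibits a strictly smaller RDS inside $V_1^g$). One then proves, for a diametral path chosen to maximize $\deg(v_1)$, that $\deg(v_1)\geq 3$ and $\deg(v_2)\geq 3$. These degree bounds are not optional: without $\deg(v_1)\geq 3$ the peeled star fails the $t\geq 2$ requirement of $\mathcal{O}_2$ and the peeled double star fails the $s\geq 2$ requirement of $\mathcal{O}_1$, so $T'\in\mathcal{H}$ would not let you rebuild $T$ inside $\mathcal{H}$. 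As written, your proposal is a plan whose two decisive steps --- the degree claims and a correct matching of structural configurations to invertible operations --- are respectively missing and wrong.
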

\begin{proof}
First, we prove that if $T \in \mathcal{H} \cup \{K_{1,t} \mid t \geq 2\}$, then $\gamma_{rI}(T) = \gamma_r(T)$.
Clearly, $\gamma_{rI}(K_{1,t}) = \gamma_r(K_{1,t})$.
Assume that $T \in \mathcal{H}$.
Then there exist a sequence $T_1, T_2, \dotsc, T_m =T$ $(m \geq 1)$ such that $T_1$ is a double star $DS_{r,s}$,
and if $m \geq 2$, $T_{i+1}$ can be obtained recursively from $T_i$ by an operation $\mathcal{O}_1$ or $\mathcal{O}_2$
for $1 \leq i \leq m-1$.
We use induction on $m$.
Clearly, $\gamma_{rI}(T_1) = \gamma_r(T_1)$.
Suppose that the statement is true for any tree constructed by $m-1$ operations.
Let $T' = T_{m-1}$. By the induction hypothesis, $\gamma_{rI}(T') = \gamma_r(T')$.
It follows from Lemma \ref{lem:op3} or \ref{lem:op4} that $\gamma_{rI}(T) = \gamma_r(T)$.

Next, we prove that if $\gamma_{rI}(T) = \gamma_r(T)$, then $T \in \mathcal{H} \cup \{K_{1,t} \mid t \geq 2\}$.
We proceed by induction on the order $n$ of $T$ satisfying $\gamma_{rI}(T) = \gamma_r(T)$.
Suppose that $diam(T)=2$.
Then $T$ is a star and clearly $\gamma_{rI}(T) = \gamma_r(T)$ Thus, $T \in \{K_{1,t} \mid t \geq 2\}$.
Suppose that $diam(T)=3$.
Then $T \cong DS_{r,s}$ for $r, s \geq 2$.
In this case, $T$ can be obtained from $K_{1,r}$ by operation $\mathcal{O}_2$.
Hence, we may assume that $diam(T) \geq 4$.

Among all of diametrical paths in $T$, we choose $x_0x_1\dotsc x_d$ so that it maximizes the degree of $x_{d-1}$.
Root $T$ at $x_0$.
Let $g=(V_0^g, V_1^g, V_2^g)$ be a $\gamma_{rI}(T)$-function.

\vskip5pt
\textbf{Claim 1.} $V_2^g = \emptyset$ and $V_1^g$ is a RDS of $T$.

Since $V_1^g \cup V_2^g$ is a RDS of $T$, we have
\[\gamma_r(T) \leq |V_1^g \cup V_2^g| = |V_1^g| +|V_2^g| \leq |V_1^g| +2|V_2^g| = \gamma_{rI}(T).\]

Since $\gamma_{rI}(T) = \gamma_r(T)$, we must have the equality throughout the above inequality chain.
Thus, $V_2^g = \emptyset$ and $V_1^g$ is a RDS of $T$.

\vskip5pt
\textbf{Claim 2.} $deg_T(x_{d-1}) \geq 3$.

Suppose to the contrary that $deg_T(x_{d-1}) =2$.
Suppose that $deg_T(x_{d-2}) =2$.
In this case, $g(x_{d-1})=1$ for otherwise $g(x_{d-2})$ must be assigned the weight $1$
but this contradicts the fact that $V_1^g$ is a RDS of $T$.
By the same argument, we have $g(x_{d-2})=1$.
If $g(x_{d-3})=1$, then $V_1^g \setminus \{ x_{d-2}, x_{d-1} \}$ is a RDS with the cardinality less than $\gamma_r(T)$, a contradiction.
Thus, $g(x_{d-3})=0$ and $\sum_{x \in N_T(x_{d-3})} g(x) \geq 2$.
This implies that $V_1^g \setminus \{x_{d-2}\}$ is a RDS of $T$.
This is a contradiction.

Suppose that $deg_T(x_{d-2}) \geq 3$.
Then each $x \in N_T(x_{d-2}) \setminus \{x_{d-3}\}$ is either a leaf or a weak stem
by $deg_T(x_{d-1}) =2$ and the hypothesis about $deg_T(x_{d-1})$.
If $g(x_{d-2})=1$, then every vertex of $T_{x_{d-2}}$ has the weight $1$.
Let $M$ be the subset of $V_1^g$ obtained by removing $x_{d-2}$ and weak stems in $T_{x_{d-2}}$.
Now we have $g(x_{d-3})=0$ for otherwise $M$  is a RDS of $T$, a contradiction.
Since $\sum_{x \in N_T(x_{d-3})} g(x) \geq 2$, $M$ is a RDS of $T$, a contradiction.
This completes the proof of claim.

\vskip5pt
\textbf{Claim 3.} $deg_T(x_{d-2}) \geq 3$.

Suppose to the contrary that $deg_T(x_{d-2}) =2$.
Suppose that $g(x_{d-2})=0$.
Then $g(x_{d-3})= g(x_{d-1})=1$, since $\sum_{x \in N_T(x_{d-2})} g(x) \geq 2$.
This is a contradiction by Claim 1.

Suppose that $g(x_{d-2})=1$.
Then $g(x_{d-1})=1$.
Let $N$ be the subset of $V_1^g$ obtained by removing $x_{d-2}$ and $x_{d-1}$.
If $g(x_{d-3})=1$, then $N$ is a RDS of $T$, a contradiction.
Thus, we have $g(x_{d-3})=0$. But, since $\sum_{x \in N_T(x_{d-3})} g(x) \geq 2$,
$N \cup \{x_{d-2}\}$ is a RDS of $T$, a contradiction.
This completes the proof of claim.

\vskip5pt
We divide our consideration into two cases.

Case 1. $x_{d-3} \in V_1^g$.
Then $x_{d-2}, x_{d-1} \in V_0^g$ for otherwise
the subset of $V_1^g$ obtained by removing $x_{d-2}$ and stems in $T_{x_{d-2}}$ is a RDS of $T$, a contradiction.

\vskip5pt
Subcase 1.1. $x_{d-2}$ has stems except for $x_{d-1}$.
Consider $T_{x_{d-1}} \cong K_{1,t}$ and let $T'= T - T_{x_{d-1}}$.
Since every $\gamma_{r}(T')$-set (resp., $\gamma_{rI}(T')$-function) can be extended to a RDS (resp., RIDF) of $T$,
$\gamma_{r}(T) \leq \gamma_{r}(T')+ t$ and $\gamma_{rI}(T) \leq \gamma_{rI}(T')+ t$.
Since $V_1^g \setminus C(x_{d-1})$ (resp., $g|_{V(T')}$) is a RDS (resp., RIDF) of $T'$,
$\gamma_{r}(T') \leq \gamma_{r}(T) - t$ and $\gamma_{rI}(T') \leq \gamma_{rI}(T) - t$.
Thus, it follows from $\gamma_{rI}(T) = \gamma_r(T)$ that $\gamma_{r}(T') = \gamma_{rI}(T')$.
Applying the inductive hypothesis to $T'$, $T' \in \mathcal{H}$.
By operation $\mathcal{O}_2$, we have $T \in \mathcal{H}$.

\vskip5pt
Subcase 1.2. $x_{d-2}$ has no stem except for $x_{d-1}$.
Consider $T_{x_{d-2}} \cong DS_{r,s}$ and let $T'= T - T_{x_{d-2}}$.
Since every $\gamma_{r}(T')$-set (resp., $\gamma_{rI}(T')$-function) can be extended to a RDS (resp., RIDF) of $T$,
we have $\gamma_{r}(T) \leq \gamma_{r}(T') +r + s$ and $\gamma_{rI}(T) \leq \gamma_{rI}(T') +r + s$.
Since $V_1^g \setminus D(x_{d-2})$ (resp., $g|_{V(T')}$) is a RDS (resp., RIDF) of $T'$,
$\gamma_{r}(T') \leq \gamma_{r}(T) -r -s$ and $\gamma_{rI}(T') \leq \gamma_{rI}(T) -r -s$.
Thus, it follows from $\gamma_{rI}(T) = \gamma_r(T)$ that $\gamma_{r}(T') = \gamma_{rI}(T')$.
Applying the inductive hypothesis to $T'$, $T' \in \mathcal{H}$.
By operation $\mathcal{O}_1$, we have $T \in \mathcal{H}$.

\vskip5pt
Case 2. $x_{d-3} \in V_0^g$.
Then $x_{d-2}, x_{d-1} \in V_0^g$ for otherwise every vertex in $T_{x_{d-2}}$ belongs to $V_1^g$.
Since $x_{d-3}$ is adjacent to at least one vertex not in $T_{x_{d-2}}$,
the subset of $V_1^g$ obtained by removing $x_{d-2}$ and stems in $T_{x_{d-2}}$ is a RDS of $T$, a contradiction.

Consider $T_{x_{d-1}} \cong K_{1,t}$ and let $T'= T - T_{x_{d-1}}$.
Since every $\gamma_{r}(T')$-set (resp., $\gamma_{rI}(T')$-function) can be extended to a RDS (resp., RIDF) of $T$,
$\gamma_{r}(T) \leq \gamma_{r}(T')+ t$ and $\gamma_{rI}(T) \leq \gamma_{rI}(T')+ t$.
Since $V_1^g \setminus C(x_{d-1})$ (resp., $g|_{V(T')}$) is a RDS (resp., RIDF) of $T'$,
$\gamma_{r}(T') \leq \gamma_{r}(T) - t$ and $\gamma_{rI}(T') \leq \gamma_{rI}(T) - t$.
Thus, it follows from $\gamma_{rI}(T) = \gamma_r(T)$ that $\gamma_{r}(T') = \gamma_{rI}(T')$.
Applying the inductive hypothesis to $T'$, $T' \in \mathcal{H}$.
By operation $\mathcal{O}_2$, we have $T \in \mathcal{H}$.
\end{proof}

\section{Restrained Italian trees}\label{sec:rI=2r}

In this section, we characterize the trees for which $\gamma_{rI}(T) = 2\gamma_r(T)$.
First, we introduce a family $\mathcal{F}$ of trees that can be obtained from a sequence $T_1, T_2, \dotsc, T_m$ $(m \geq 1)$
of trees such that $T_1$ is a path $P_4$, and if $m \geq 2$, $T_{i+1}$ can be obtained recursively from $T_i$ by one of the following operations
for $1 \leq i \leq m-1$.

Define
\[LV(T_i) =\{ v \in V(T_i) \mid v ~\text{is a leaf of}~ T_j ~\text{for some}~ j \leq i\}.\]

\vskip5pt
\textbf{Operation} $\mathcal{O}_1$.
If $x \in LV(T_i)$, then $\mathcal{O}_1$ adds a path $P_3$ with a leaf $u$ and joins $u$ to $x$ to produce $T_{i+1}$.

\vskip5pt
\textbf{Operation} $\mathcal{O}_2$.
If $x \in LV(T_i)$, then $\mathcal{O}_2$ adds a healthy spider $S_{t,t}$ with the center $u$ and joins $u$ to $x$ to produce $T_{i+1}$.

Since the family $\mathcal{F}$ is a subclass of $\mathcal{T}$ given in \cite[Lemma 3]{DHHS}, we can get the following result.

\begin{lem}\label{lem:global}
With the previous notation, the following properties hold.
\begin{enumerate}
\item $LV(T_i)$ is a packing.
\item Every $v \in V(T_i) \setminus LV(T_i)$ is adjacent to at least one vertex in $V(T_i) \setminus LV(T_i)$ and to exactly one vertex in $LV(T_i)$.
\item $LV(T_i)$ is a $\gamma(T_i)$-set.
\item $LV(T_i)$ is the unique $\gamma_r(T_i)$-set.
\item $LV(T_i)$ is the unique $\rho(T_i)$-set.
\end{enumerate}
\end{lem}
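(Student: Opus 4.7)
The plan is two-pronged. Primarily, I would exploit the remark immediately preceding the statement: invoke Lemma~3 of \cite{DHHS}, which establishes (i)--(v) for every tree in the larger family $\mathcal{T}$ introduced there. Thus it suffices to verify the containment $\mathcal{F} \subseteq \mathcal{T}$. I would do this by induction on the construction of members of $\mathcal{F}$: check that $P_4$ lies in $\mathcal{T}$, and then argue that each of $\mathcal{O}_1$ (appending a $P_3$ at an $LV$-vertex) and $\mathcal{O}_2$ (appending a healthy spider $S_{t,t}$ at an $LV$-vertex) is either an operation generating $\mathcal{T}$ or is realized by a short sequence of such operations. Once this is done, the five conclusions transfer verbatim.

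If a self-contained argument is preferred, one would instead induct on the number $m$ of operations used to produce $T_i$. The base case $T_1 = P_4$ with vertices $v_1 v_2 v_3 v_4$ has $LV(T_1) = \{v_1, v_4\}$, and all five assertions are immediate: the distance from $v_1$ to $v_4$ equals $3$, giving (i); a short degree check on $v_2, v_3$ gives (ii); and since this set of size $2$ is simultaneously a packing and a dominating set, the standard chain $\rho(T) \le \gamma(T) \le \gamma_r(T)$ forces (iii), (v), and the cardinality part of (iv).

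For the inductive step, assume (i)--(v) hold for $T_i$ and let $T_{i+1}$ be obtained by one operation applied at $x \in LV(T_i)$. The new leaves entering $LV(T_{i+1})$ sit at distance exactly $3$ from $x$ (namely, the end $w$ of the appended $P_3$, or each of the leaves $w_1, \dots, w_t$ of the appended spider), and by the packing hypothesis on $LV(T_i)$ they lie at distance at least $3$ from every other member of $LV(T_i)$; this gives (i). The newly created non-$LV$ vertices are entirely inside the attached gadget, and a short local check reveals that each has exactly one $LV$-neighbor and at least one non-$LV$-neighbor, giving (ii) on $T_{i+1}$. Properties (iii) and (v) then follow from the same packing-versus-domination squeeze as in the base case.

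The main obstacle is the \emph{uniqueness} assertion in (iv). Here I would argue that any minimum RDS $S$ of $T_{i+1}$, when restricted to the vertices added by the final operation, must coincide with the new $LV$-members: in the $\mathcal{O}_1$ case the new leaf $w$ is forced into $S$ (otherwise the restrained condition propagates through the path $w\text{--}v\text{--}u$ and raises $|S|$ beyond $|LV(T_{i+1})|$), and in the $\mathcal{O}_2$ case an analogous leg-by-leg analysis on each $u\text{--}v_j\text{--}w_j$ of the spider forces exactly the $t$ spider-leaves into $S$. The restriction of $S$ to $V(T_i)$ is then a RDS of $T_i$ of minimum cardinality, so by the uniqueness part of the induction hypothesis it equals $LV(T_i)$, whence $S = LV(T_{i+1})$.
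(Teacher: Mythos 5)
Your primary route is exactly what the paper does: it gives no proof at all beyond the remark that $\mathcal{F}$ is a subclass of the family $\mathcal{T}$ of \cite[Lemma 3]{DHHS}, from which all five properties are imported. Your first prong therefore matches the paper (and supplies the containment check the paper leaves implicit), while your self-contained inductive argument is a correct and more detailed alternative than anything the paper provides.
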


\begin{lem}\label{lem:key}
With the previous notation,
$(V(T_i) \setminus LV(T_i), \emptyset, LV(T_i))$ is a $\gamma_{rI}(T_i)$-function.
\end{lem}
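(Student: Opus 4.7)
The plan is to verify two things: that the proposed function $f$, which assigns weight $2$ to each vertex of $LV(T_i)$ and weight $0$ to each vertex of $V(T_i) \setminus LV(T_i)$, is a restrained Italian dominating function, and that its weight $2|LV(T_i)|$ equals $\gamma_{rI}(T_i)$. The first point follows at once from Lemma~\ref{lem:global}(ii): every $w \in V(T_i) \setminus LV(T_i)$ has exactly one neighbor in $LV(T_i)$, contributing weight $2$ to the Italian sum at $w$; and it has at least one neighbor in $V(T_i) \setminus LV(T_i)$, providing the required partner in the subgraph induced by $V_0^f$.

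For the optimality of the weight, one direction is free: combining Lemma~\ref{lem:global}(iv) with the general inequality $\gamma_{rI}(G) \leq 2\gamma_r(G)$ yields $\gamma_{rI}(T_i) \leq 2\gamma_r(T_i) = 2|LV(T_i)| = \omega(f)$. The task therefore reduces to the matching lower bound $\gamma_{rI}(T_i) \geq 2|LV(T_i)|$. My strategy is to exploit the packing. By Lemma~\ref{lem:global}(i) and (ii), the closed neighborhoods $\{N_{T_i}[v] : v \in LV(T_i)\}$ are pairwise disjoint and together partition $V(T_i)$, so for any RIDF $g$ on $T_i$,
\[
\omega(g) \;=\; \sum_{v \in LV(T_i)} g(N_{T_i}[v]),
\]
and it suffices to prove that $g(N_{T_i}[v]) \geq 2$ for every $v \in LV(T_i)$.

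The cases $g(v) = 2$ and $g(v) = 0$ are immediate (the latter from the Italian condition on $v$). The main obstacle is the case $g(v) = 1$, in which I need to exhibit a neighbor of $v$ of weight at least $1$. The idea is to trace $v$ back to its moment of introduction: if $j$ is the smallest index with $v \in V(T_j)$, then $v$ is a leaf of $T_j$, and I take $w_0$ to be its unique $T_j$-neighbor. A short case check (initial $P_4$, or a leaf $b$ added by $\mathcal{O}_1$, or a spider leaf $l_i$ added by $\mathcal{O}_2$) shows that $w_0 \notin LV(T_i)$; hence no subsequent operation is ever attached to $w_0$, and its neighborhood in $T_i$ remains exactly the two vertices it had in $T_j$, namely $v$ itself and a second vertex $y$ whose unique $LV$-neighbor is distinct from $v$.

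Assuming $g(w_0) = 0$ for contradiction, the Italian condition at $w_0$ forces $g(v) + g(y) \geq 2$, hence $g(y) \geq 1$; but then neither neighbor of $w_0$ lies in $V_0^g$, contradicting the restrained condition at $w_0$. Thus $g(w_0) \geq 1$, which gives $g(N_{T_i}[v]) \geq g(v) + g(w_0) \geq 2$. Summing over $v \in LV(T_i)$ completes the lower bound $\omega(g) \geq 2|LV(T_i)|$. The only nontrivial ingredient is the structural claim that the distinguished neighbor $w_0$ is a degree-two vertex of the described form in each case; once that is in hand, the rest of the argument is routine bookkeeping.
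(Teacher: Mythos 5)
Your proposal is correct and follows essentially the same route as the paper: both decompose $V(T_i)$ into the closed neighborhoods of the packing $LV(T_i)$ and show each part receives weight at least $2$ under any RIDF, with the critical case $g(v)=1$ resolved by locating a degree-two neighbor of $v$ whose Italian and restrained conditions cannot be satisfied simultaneously if it gets weight $0$. Your write-up is somewhat more explicit than the paper's (you verify the RIDF property of the proposed function and justify the degree-two neighbor's existence by tracing $v$ to the operation that introduced it), but the underlying argument is the same.
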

\begin{proof}

We show that every RIDF has weight at least $2|LV(T_i)|$.
Let $f$ be a RIDF of $T_i$ and $P(T_i):=\{ N_{T_i}[v] \mid v \in LV(T_i)\}$.
It follows from Lemma \ref{lem:global} that $P(T_i)$ is a partition of $V(T_i)$.

We claim that $f(U) \geq 2$ for each $U \in P(T_i)$.
For a leaf $v \in V(T_i)$, clearly $f(N_{T_i}[v])=2$.
Suppose to the contrary that $f(N_{T_i}[u]) = 1$ for some $u \in LV(T_{i-1})$.
For $w \in N_{T_i}(u)$, if $f(w)=1$, then $u$ is not dominated, a contradiction.
Assume that $f(u)=1$.
From the construction of $T_i$, there exist at least one vertex $z \in N_{T_i}(u)$ such that $deg_{T_i}(z)=2$.
To dominate $z$, there must be one vertex with weight at least one.
This is not restrained, a contradiction.
Thus, $\gamma_{rI}(T_i) \geq 2|LV(T_i)|=2\gamma_r(T_i)$ and
clearly $(V(T_i) \setminus LV(T_i), \emptyset, LV(T_i))$ is a $\gamma_{rI}(T_i)$-function.
\end{proof}

Now we are ready to prove our main theorem.

\begin{thm}\label{tree:up-inequality}
A tree $T$ of order $n \geq 4$ is a restrained Italian tree if and only if $T \in \mathcal{F}$.
\end{thm}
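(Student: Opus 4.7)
The sufficiency direction is immediate from the preceding lemmas. If $T\in\mathcal{F}$, then Lemma~\ref{lem:global}(iv) gives $\gamma_r(T)=|LV(T)|$ and Lemma~\ref{lem:key} gives $\gamma_{rI}(T)=2|LV(T)|$, so $\gamma_{rI}(T)=2\gamma_r(T)$ at once.

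For the converse I will induct on $n$. For the base case $n=4$ a direct check rules out $K_{1,3}$ (where $\gamma_r=\gamma_{rI}=4$) and leaves only $P_4=T_1$. For the inductive step with $n\geq 5$ I first dispose of $\mathrm{diam}(T)=3$: for any double star $DS_{p,q}$ with $p,q\geq 1$ and $p+q\geq 3$ one exhibits an explicit RIDF of weight strictly less than $2(p+q)=2\gamma_r$ by assigning $0$ to the stems and $1$ or $2$ to the leaves so that both stems are dominated. Hence $\mathrm{diam}(T)\geq 4$, and I fix a diametral path $x_0x_1\cdots x_d$ and root $T$ at $x_0$.

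The technical heart of the argument is to show that $T_{x_{d-2}}$ must be isomorphic to a healthy spider $S_{t,t}$ for some $t\geq 1$ (which degenerates to $P_3$ when $t=1$, matching operation $\mathcal{O}_1$; $t\geq 2$ corresponds to $\mathcal{O}_2$). Since the path is diametral, every child of $x_{d-2}$ other than $x_{d-1}$ is either a leaf or a stem whose children are all leaves. The step I expect to be the main obstacle is ruling out every other configuration: a stray leaf child of $x_{d-2}$, a stem child of $x_{d-2}$ with more than one leaf, or $x_{d-1}$ itself having more than one leaf. In each such exceptional case I plan to construct a RIDF of weight strictly less than $2\gamma_r(T)$, typically by assigning one of the exceptional leaves weight $2$ while setting its stem to $0$, and combining this with a canonical $(V\setminus LV,\emptyset,LV)$-style assignment in the spirit of Lemma~\ref{lem:key} on the remainder of the tree. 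This contradicts $\gamma_{rI}(T)=2\gamma_r(T)$ and forces the spider structure.

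Once $T_{x_{d-2}}\cong S_{t,t}$ is established, set $T'=T-T_{x_{d-2}}$. Extension/restriction arguments modeled on Lemmas~\ref{lem:op3} and~\ref{lem:op4} (extending a $\gamma_r(T')$-set by the $t$ new leaves, and restricting a $\gamma_{rI}(T)$-function to $T'$) yield $\gamma_r(T)=\gamma_r(T')+t$ and $\gamma_{rI}(T)=\gamma_{rI}(T')+2t$. Combined with $\gamma_{rI}(T)=2\gamma_r(T)$, this forces $\gamma_{rI}(T')=2\gamma_r(T')$, so by induction $T'\in\mathcal{F}$. By Lemma~\ref{lem:global}(iv), $LV(T')$ is the unique $\gamma_r(T')$-set, and one verifies that $x_{d-3}$ lies in it (either because $x_{d-3}$ becomes a leaf of $T'$, or, when $x_{d-3}$ retains higher degree in $T'$, because the extremal choice of diametral path together with the structural claim places $x_{d-3}$ in the unique minimum RDS). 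Applying operation $\mathcal{O}_1$ or $\mathcal{O}_2$ at $x_{d-3}$ reattaches $T_{x_{d-2}}$ and concludes $T\in\mathcal{F}$.
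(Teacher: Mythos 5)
Your plan follows the same route as the paper's proof: sufficiency from Lemmas \ref{lem:global} and \ref{lem:key}; necessity by induction on $n$, disposing of small diameters, fixing a diametral path $x_0\cdots x_d$ chosen to maximize $\deg_T(x_{d-1})$, showing $T_{x_{d-2}}$ is a healthy spider (or $P_3$), deleting it, applying induction to $T'=T-T_{x_{d-2}}$, and reattaching by $\mathcal{O}_1$ or $\mathcal{O}_2$. Your handling of the reattachment is in fact slightly more careful than the paper's: you invoke Lemma \ref{lem:global}(iv) to place $x_{d-3}$ in the unique minimum RDS $LV(T')$, whereas the paper asserts that $x_{d-3}$ is a leaf of $T'$, which need not hold.

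The one place your sketch would fail as written is the recipe for the ``main obstacle'' step. Assigning an exceptional leaf weight $2$ and its stem weight $0$ is exactly what the canonical function already does --- for any $\gamma_r(T)$-set $D$, every leaf lies in $D$ and (one shows) its stem does not, so the RIDF $g$ with $g\equiv 2$ on $D$ and $g\equiv 0$ off $D$ already has this form and has weight exactly $2\gamma_r(T)=\gamma_{rI}(T)$. No weight is saved, and there is no contradiction. What the paper does instead is take this $g$ as the starting point and \emph{lower} selected values from $2$ to $1$: if a vertex of $V_0^g$ has two leaf neighbours in $D$ (e.g.\ $x_{d-1}$ with two leaves, or $x_{d-2}$ with a leaf child in the presence of the leaf-containing branch through $x_{d-1}$), dropping those leaves to weight $1$ keeps its neighbour sum at least $1+1=2$, leaves $V_0$ unchanged, and strictly decreases the weight; if instead the relevant $0$-vertex already sees a weight-$2$ neighbour such as $x_{d-3}\in D$, a single leaf can be dropped to $1$. (A third variant, used when $x_{d-3}\notin D$, raises $g(x_{d-1})$ from $0$ to $1$ while lowering two leaves, for a net saving of $1$.) You should replace your ``weight $2$ on a leaf, $0$ on its stem'' device with these local reductions; with that change the exceptional configurations are ruled out exactly as in the paper, and the rest of your plan goes through. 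Note also that your asserted identity $\gamma_{rI}(T)=\gamma_{rI}(T')+2t$ needs a genuine verification in the $t=1$ case (a minimum RIDF of $T'$ extends by exactly $2$ only when $x_{d-3}$ receives weight $2$); the paper glosses over this as well, so it is a shared, not a new, debt.
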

\begin{proof}

The sufficiency follows from Lemmas \ref{lem:global} and \ref{lem:key}.
To prove the necessity,
we proceed by induction on the order $n$ of $T$ satisfying $\gamma_{rI}(T) = 2\gamma_r(T)$.
It suffices to consider trees with diameter at least three.
Suppose that $diam(T)=3$.
Then $T \cong DS_{r,s}$.
Since $\gamma_{r}(T) = \gamma_{rI}(T) = r+s$ for $r, s \geq 2$,
we have $DS_{1,1} \cong P_4 \in \mathcal{F}$.
Hence, we assume that $diam(T) \geq 4$ and $n \geq 5$.

Among all of diametrical paths in $T$, we choose $x_0x_1\dotsc x_d$ so that it maximizes the degree of $x_{d-1}$.
Root $T$ at $x_0$.
Let $D$ be a $\gamma_r(T)$-set and $g$ be a $\gamma_{rI}(T)$-function
defined by $g(v)=2$ for $v \in D$ and $g(u)=0$ for $u \in V(T) \setminus D$.

\vskip5pt
\textbf{Claim 1.} $x_{d-1} \not\in D$.

Suppose to the contrary that $x_{d-1} \in D$.
Since $x_d \in D$ and $\gamma_{rI}(T) = 2\gamma_r(T)$,
we can define a function $f : V(T) \rightarrow \{0,1,2\}$ by $f(x_d)=1$ and $f(x)=g(x)$ otherwise.
Then $f$ is a RIDF of $T$ with weight less than $\omega(g)$, a contradiction.

\vskip5pt
\textbf{Claim 2.} $deg_T(x_{d-1})=2$.

Suppose to the contrary that $deg_T(x_{d-1}) \geq 3$.
Then there exists at least one leaf $u \in N_T(x_{d-1}) \setminus \{x_d\}$.
Since $u, x_d \in D$ and $\gamma_{rI}(T) = 2\gamma_r(T)$,
we can define $f : V(T) \rightarrow \{0,1,2\}$ by $f(u)=f(x_d)=1$ and $f(x)=g(x)$ otherwise.
Then $f$ is a RIDF of $T$ with weight less than $\omega(g)$, a contradiction.

\vskip5pt
We divide our consideration into two cases.

Case 1. $deg_T(x_{d-2}) \geq 3$.
By Claim 2, each $x \in N_T(x_{d-2}) \setminus \{ x_{d-3}\}$ is either a leaf or a weak stem.
Now we show that $x_{d-2}$ has no leaf.
Suppose to the contrary that there exists a leaf $u \in N_T(x_{d-2})$.
Then $u \in D$. Note that $x_{d-1}, x_{d-2} \not\in D$.
If $x_{d-3} \in D$, then define $f : V(T) \rightarrow \{0,1,2\}$ by $f(u)=1$ and $f(x)=g(x)$ otherwise.
Clearly $f$ is a RIDF of $T$ with weight less than $\omega(g)$, a contradiction.
Suppose that $x_{d-3} \not\in D$.
Define $h : \rightarrow \{0,1,2\}$ by $h(u) =h(x_{d-1})= h(x_d) =1$ and $h(x)=g(x)$ otherwise.
Clearly $h$ is a RIDF of $T$ with weight less than $\omega(g)$, a contradiction.
Thus, $x_{d-2}$ has no leaf and so $T_{x_{d-2}}$ is a healthy spider.

Since $x_{d-2}$ and its children do not belong to the $\gamma_r(T)$-set $D$,
$x_{d-3}$ belongs to $D$.
Consider the tree $T':= T - T_{x_{d-2}}$.
It is easy to see that $V(T') \cap D$ is a $\gamma_r(T')$-set and $\gamma_{rI}(T') = 2\gamma_r(T')$.
Applying the inductive hypothesis to $T'$, we have $T' \in \mathcal{F}$.
Since $x_{d-3}$ is a leaf in $T'$, the tree $T$ can be obtained from the tree $T'$ by applying operation $\mathcal{O}_2$.
Thus, $T \in \mathcal{F}$.

\vskip5pt
Case 2.  $deg_T(x_{d-2})=2$.
Then $x_{d-3} \in D$.
Consider the tree $T':= T - T_{x_{d-2}}$
It is easy to see that $V(T') \cap D$ is a $\gamma_r(T')$-set and $\gamma_{rI}(T') = 2\gamma_r(T')$.
Applying the inductive hypothesis to $T'$, we have $T' \in \mathcal{F}$.
Since $x_{d-3}$ is a leaf in $T'$, the tree $T$ can be obtained from the tree $T'$ by applying operation $\mathcal{O}_1$.
Thus, $T \in \mathcal{F}$.
\end{proof}

\section{Open problems}\label{sec:open}

In this section, we discuss few open problems related to our results.
For any graph theoretical parameters $\sigma$ and $\delta$, we define a tree $T$ to be $(\sigma, \delta)$-tree if $\sigma(T) = \delta(T)$.
In general, it holds $\gamma_I(G) \leq \gamma_{rI}(G)$ for any graph $G$.
We suggest the following problem.

\begin{prob}
Characterize $(\gamma_I, \gamma_{rI})$-trees.
\end{prob}

In \cite{MCS}, D. Ma et al. gave the concept of total restrained domination.
Combining the properties of Italian dominating function and total restrained dominating set,
we give the concept of total restrained Italian dominating function, namely
a RIDF is a \textit{total restrained Italian dominating function} on $G$ if
the subgraph induced by $\{v \in V \mid f(v) \geq 1 \}$ has no isolated vertices.
We denote the total restrained Italian domination number by $\gamma_{rI}^t(G)$.
The total Italian domination number and total restrained domination number are
denoted by $\gamma_{I}^t(G)$ and $\gamma_r^t(G)$, respectively (see \cite{GMMY,MCS} for definitions).
We suggest the following problems.

\begin{prob}
Characterize $(\gamma_I^t, \gamma_{rI}^t)$-trees.
\end{prob}

\begin{prob}
Characterize $(\gamma_{rI}, \gamma_{rI}^t)$-trees.
\end{prob}

\begin{prob}
Characterize $(\gamma_r^t, \gamma_{rI}^t)$-trees.
\end{prob}

\bibstyle{plain}

\end{document}